\nonstopmode \numberwithin{equation}{section}
\nonstopmode \numberwithin{equation}{section}
\theoremstyle{plain}
\newtheorem{conj}{Conjecture}
\theoremstyle{definition}
\newtheorem{defi}{Definition}[section]
\newtheorem{thm}{Theorem}[section]
\newtheorem{prob}{Problem}[section]
\newtheorem{cor}{Corollary}[section]
\newtheorem{prop}{Proposition}[section]
\newtheorem{rem}{Remark}[section]
\newtheorem{lem}{Lemma}[section]
\newcounter{minutes}\setcounter{minutes}{\time}
\newcounter{hours}\setcounter{hours}{\time}
\newcounter {own}
\def\theown {\thesection       .\arabic{own}}
\newenvironment{pf}[1][]{%
 \vskip 3mm
 \noindent
 \ifthenelse{\equal{#1}{}}%
  {{\slshape Proof. }}%
  {{\slshape #1.} }%
 }%
{\qed\bigskip}
\newcounter{alphabet}
\def\be{\begin{equation}}
\def\ee{\end{equation}}
\newcommand{\bee}{\begin{enumerate}}
\newcommand{\eee}{\end{enumerate}}
\newcommand{\blem}{\begin{lem}}
\newcommand{\elem}{\end{lem}}
\newcommand{\bthm}{\begin{thm}}
\newcommand{\ethm}{\end{thm}}
\newcommand{\bcor}{\begin{cor}}
\newcommand{\ecor}{\end{cor}}
\newcommand{\beg}{\begin{examp}}
\newcommand{\eeg}{\end{examp}}
\newcommand{\begs}{\begin{examples}}
\newcommand{\eegs}{\end{examples}}
\newcommand{\bdefn}{\begin{defn}}
\newcommand{\edefn}{\end{defn}}
\newcommand{\bprob}{\begin{prob}}
\newcommand{\eprob}{\end{prob}}
\newcommand{\bei}{\begin{itemize}}
\newcommand{\eei}{\end{itemize}}
\newcommand{\bcon}{\begin{conj}}
\newcommand{\econ}{\end{conj}}
\newcommand{\bcons}{\begin{conjs}}
\newcommand{\econs}{\end{conjs}}
\newcommand{\bprop}{\begin{prop}}
\newcommand{\eprop}{\end{prop}}
\newcommand{\br}{\begin{rem}}
\newcommand{\er}{\end{rem}}
\newcommand{\brs}{\begin{rems}}
\newcommand{\ers}{\end{rems}}
\newcommand{\bo}{\begin{obser}}
\newcommand{\eo}{\end{obser}}
\newcommand{\bos}{\begin{obsers}}
\newcommand{\eos}{\end{obsers}}
\newcommand{\bpf}{\begin{pf}}
\newcommand{\epf}{\end{pf}}
\newcommand{\ba}{\begin{array}}
\newcommand{\ea}{\end{array}}
\newcommand{\beq}{\begin{eqnarray}}
\newcommand{\beqq}{\begin{eqnarray*}}
\newcommand{\eeq}{\end{eqnarray}}
\newcommand{\eeqq}{\end{eqnarray*}}
\begin{document}

\title{Second Hankel determinant of Logarithmic coefficients for Starlike and Convex functions associated with lune}
\author{Sanju Mandal}
\address{Sanju Mandal, Department of Mathematics, Jadavpur University, Kolkata-700032, West Bengal,India.}
\email{sanjum.math.rs@jadavpuruniversity.in}

\author{Molla Basir Ahamed}
\address{Molla Basir Ahamed, Department of Mathematics, Jadavpur University, Kolkata-700032, West Bengal,India.}
\email{mbahamed.math@jadavpuruniversity.in}

\subjclass[{AMS} Subject Classification:]{Primary 30A10, 30H05, 30C35, Secondary 30C45}
\keywords{Univalent functions, Starlike functions, Convex functions, Hankel Determinant, Logarithmic coefficients, Schwarz functions}

\def\thefootnote{}
\footnotetext{ {\tiny File:~\jobname.tex,
printed: \number\year-\number\month-\number\day,
          \thehours.\ifnum\theminutes<10{0}\fi\theminutes }
} \makeatletter\def\thefootnote{\@arabic\c@footnote}\makeatother
\begin{abstract} 
The Hankel determinant $H_{2,1}(F_{f}/2)$ is defined as:
\begin{align*}
	H_{2,1}(F_{f}/2):= \begin{vmatrix}
		\gamma_1 & \gamma_2 \\
		\gamma_2 & \gamma_3
	\end{vmatrix},
\end{align*}
where $\gamma_1, \gamma_2,$ and $\gamma_3$ are the first, second and third logarithmic coefficients of functions belonging to the class $\mathcal{S}$ of normalized univalent functions. In this article, we establish sharp inequalities $|H_{2,1}(F_{f}/2)|\leq 1/16$ and $|H_{2,1}(F_{f}/2)| \leq 23/3264$ for the logarithmic coefficients of starlike and convex functions associated with lune.
\end{abstract}
\maketitle
\pagestyle{myheadings}
\markboth{S. Mandal and M. B. Ahamed}{Second Hankel determinant of Logarithmic coefficients}

\section{Introduction}
Suppose $\mathcal{H}$ be the class of functions $ f $ which are holomorphic in the open unit disk $\mathbb{D}=\{z\in\mathbb{C}: |z|<1\}$ of the form 
\begin{align}\label{eq-1.1}
	f(z)=\sum_{n=1}^{\infty}a_nz^n,\; \mbox{for}\; z\in\mathbb{D}.
\end{align}
Then $\mathcal{H}$ is a locally convex topological vector space endowed with the topology of uniform convergence over compact subsets of $\mathbb{D}$. Let $\mathcal{A}$ denote the class of functions $f\in\mathcal{H}$ such that $f(0)=0$ and $f^{\prime}(0)=1$. That is, the functions $f$ of the form
\begin{align}\label{eq-1.2}
	f(z)=z+ \sum_{n=2}^{\infty}a_nz^n,\; \mbox{for}\; z\in\mathbb{D}.
\end{align} 
Let $\mathcal{S}$ denote the subclass of all functions in $\mathcal{A}$ which are univalent. For a general theory of univalent functions, we refer the classical books \cite{Duren-1983-NY,Goodman-1983}.

Let
\begin{align}\label{eq-1.3}
	F_{f}(z):=\log\dfrac{f(z)}{z}=2\sum_{n=1}^{\infty}\gamma_{n}(f)z^n, \;\; z\in\mathbb{D},\;\;\log 1:=0,
\end{align}
be a logarithmic function associated with $f\in\mathcal{S}$. The numbers $\gamma_{n}:=\gamma_{n}(f)$ are called the logarithmic coefficients of $f$. Although the logarithmic coefficients $\gamma_{n}$ play a critical role in the theory of univalent functions, it appears that there are only a limited number of exact upper bounds established for them. As is well known, the logarithmic coefficients play a crucial role in Milin’s conjecture (\cite{Milin-1977-ET}, see also\cite[p.155]{Duren-1983-NY}). We note that for the class $\mathcal{S}$ sharp estimates are known only for $\gamma_{1}$ and $\gamma_{2}$, namely
\begin{align*}
	|\gamma_{1}|\leq 1, \;\; |\gamma_{2}|\leq \dfrac{1}{2}+ \dfrac{1}{e} =0.635\ldots
\end{align*}
Estimating the modulus of logarithmic coefficients for $f\in\mathcal{S}$ and various sub-classes has been considered recently by several authors. We refer to the articles \cite{Ali-Allu-PAMS-2018, Ali-Allu-Thomas-CRMCS-2018,Cho-Kowalczyk-kwon-Lecko-Sim-RACSAM-2020,Girela-AASF-2000,Thomas-PAMS-2016} and references therein.\vspace{2mm}

\begin{defi}\label{def-1.1}
	Let $f$ and $g$ be two analytic functions. Then $f$ is subordinated by $g$ and written as $f(z)\prec g(z)$, if there exists a self map $w$ such that $w(0)=0$ such that $f(z)=g(w(z))$. Moreover, if $g$ is univalent and $f(0)=g(0)$, then $f(\mathbb{D})\subseteq g(\mathbb{D})$.
\end{defi} 
Raina and Sokol \cite{Raina-Sokół-CRMASCP-2015} introduced the class $\mathcal{S}^{*}_{\mbox{\leftmoon}}$ given by
\begin{align*}
	\mathcal{S}^{*}_{\mbox{\leftmoon}}:=\left\{f\in\mathcal{S}:\; \vline\left(\frac{zf^{\prime}(z)}{f(z)}\right)^2 -1\vline\leq 2\vline\frac{zf^{\prime}(z)}{f(z)}\vline,\;\; z\in\mathbb{D}\right\}.
\end{align*}
Geometrically, a function $f\in\mathcal{S}^{*}_{\mbox{\leftmoon}}$ is that, for any $z\in\mathbb{D}$, the ratio $\frac{zf^{\prime}(z)}{f(z)}$ contains the region which is bounded by the lune. It is given by the relation $\left\{w\in\mathbb{C}:|w^2 -1|\leq 2|w|\right\}$. By using the definition of the subordination, the class $\mathcal{S}^{*}_{\mbox{\leftmoon}}$ is defined as
\begin{align*}
	\mathcal{S}^{*}_{\mbox{\leftmoon}}:=\left\{f\in\mathcal{S}: \frac{zf^{\prime}(z)}{f(z)}\prec z+\sqrt{1+ z^2}=q(z),\;\; z\in\mathbb{D}\right\},
\end{align*}
where branch of the square root is chosen to be $q(0)=1$. The class $\mathcal{C}^{*}_{\mbox{\leftmoon}}$ of convex function $q$ is defined as
\begin{align*}
	\mathcal{C}^{*}_{\mbox{\leftmoon}}:=\left\{f\in\mathcal{S}: 1+ \frac{ zf^{\prime\prime}(z)}{f^{\prime}(z)}\prec q(z),\;\; z\in\mathbb{D}\right\},
\end{align*}
The class $\mathcal{S}^{*}_{\mbox{\leftmoon}}$ has been the subject of extensive investigation by several authors. The coefficient estimates of the class $\mathcal{S}^{*}_{\mbox{\leftmoon}}$ were investigated by Raina and Sokol \cite{Raina-Sokół-HJMS-2015, Raina-Sokół-MMN-2015}, whereas Gandhi and Ravichandran \cite{Gandhi-Ravichandran-AEJM-2017} examined the radius issues associated with the same class. Certain differential subordinations related to the class $\mathcal{S}^{*}_{\mbox{\leftmoon}}$ were studied by Sharma \textit{et al.} \cite{Sharma-Raina-Sokół-AMP-2019}.
Raina \textit{et al.} \cite{Raina-Sharma-Sokół-JCMA-2018} give integral representation and sufficient conditions for the functions in the class $\mathcal{S}^{*}_{\mbox{\leftmoon}}$. A recent contribution by Cho \textit{et al.} \cite{Cho-Kumar-Kwon-Sim-JIA-2019} proposed a conjecture regarding the coefficients of this particular class. \vspace{2mm}

In geometric function theory, a lot of emphasis have been given to evaluate the bounds of Hankel determinants, whose elements are the coefficients of analytic functions $f$ characterize in $\mathbb{D}$ of the form \eqref{eq-1.2}. Hankel matrices (and determinants) play a key role in several branches of mathematics and have various applications \cite{Ye-Lim-FCM-2016}.\vspace{1.2mm}

This study is dedicated to providing the sharp bound for the second Hankel determinant, whose entries are the logarithmic coefficients. We commence by presenting the definitions of Hankel determinants in the case where $f\in \mathcal{A}$.\vspace{1.2mm}

The Hankel determinant $H_{q,n}(f)$ of Taylor's coefficients of functions $f\in\mathcal{A}$ represented by \eqref{eq-1.1} is defined for $q,n\in\mathbb{N}$ as follows:
\begin{align*}
	H_{q,n}(f):=\begin{vmatrix}
		a_{n} & a_{n+1} &\cdots& a_{n+q-1}\\ a_{n+1} & a_{n+2} &\cdots& a_{n+q} \\ \vdots & \vdots & \vdots & \vdots \\ a_{n+q-1} & a_{n+q} &\cdots& a_{n+2(q-1)}
	\end{vmatrix}.
\end{align*}
Kowalczyk and Lecko \cite{Kowalczyk-Lecko-BAMS-2022} recently proposed a Hankel determinant whose elements are the logarithmic coefficients of $f\in\mathcal{S}$, realizing the extensive use of these coefficients. This determinant is expressed as follows:
\begin{align*}
	H_{q,n}(F_{f}/2)=\begin{vmatrix}
		\gamma_{n} & \gamma_{n+1} &\cdots& \gamma_{n+q-1}\\ \gamma_{n+1} & \gamma_{n+2} &\cdots& \gamma_{n+q} \\ \vdots & \vdots & \vdots & \vdots \\ \gamma_{n+q-1} & \gamma_{n+q} &\cdots& \gamma_{n+2(q-1)}
	\end{vmatrix}.
\end{align*}

The study of Hankel determinants for starlike, convex, or many other functions has been done extensively (see \cite{Ponnusamy-Sugawa-BDSM-2021,Kowalczyk-Lecko-RACSAM-2023,Raza-Riza-Thomas-BAMS-2023,Sim-Lecko-Thomas-AMPA-2021,Kowalczyk-Lecko-BAMS-2022}), their sharp bounds have been established. Recently, the Hankel determinants with logarithmic coefficients have been examined for certain subclasses of starlike, convex, univalent, strongly starlike and strongly convex functions (see \cite{Allu-Arora-Shaji-MJM-2023,Kowalczyk-Lecko-BAMS-2022,Kowalczyk-Lecko-BMMS-2022} and references therein). However, a little is known about sharp bounds of Hankel determinants of logarithmic coefficients and need to explore them for many classes of functions.\vspace{2mm} 

Differentiating \eqref{eq-1.3} and using \eqref{eq-1.2}, a simple computation shows that 
\begin{align*}
	\begin{cases}
		\gamma_{1}=\dfrac{1}{2}a_{2},\vspace{1.5mm}\\ \gamma_{2}=\dfrac{1}{2} \left(a_{3} -\dfrac{1}{2}a^2_{2}\right), \vspace{1.5mm}\\ \gamma_{3} =\dfrac{1}{2}\left(a_{4}- a_{2}a_{3} +\dfrac{1}{3}a^3_{2}\right), \vspace{1.5mm}\\ \gamma_{4}= \dfrac{1}{2} \left(a_{5} -a_{2}a_{4} +a^2_{2} a_{3} -\dfrac{1}{2}a^2_{3} -\dfrac{1}{4}a^4_{2}\right), \vspace{1.5mm}\\ \gamma_{5}= \dfrac{1}{2}\left(a_{6} -a_{2}a_{5} -a_{3}a_{4} +a_{2} a^2_{3} + a^2_{2} a_{4} -a^3_{2} a_{3} +\dfrac{1}{5}a^5_{2}\right).
	\end{cases}
\end{align*}

Due to the great importance of logarithmic coefficients in the recent years, it is appropriate and interesting to compute the Hankel determinant whose entries are logarithmic coefficients. In particular, the second Hankel determinant of $F_{f}/2$ is defined as
\begin{align}\label{eq-1.4}
	H_{2,1}(F_{f}/2)=\gamma_{1}\gamma_{3} -\gamma^2_{2}=\dfrac{1}{48} \left(a^4_2 - 12 a^2_3 + 12 a_2 a_4\right).
\end{align}
In this paper, we aim to explore by examining the sharp bound of the Hankel determinant $H_{2,1}(F_{f}/2)$ for two class of functions, namely, starlike and convex functions associated with lune.\vspace{2mm}

It is known that for the Koebe function $f(z)=z/(1-z)^2$, the logarithmic coefficients are $\gamma_{n}=1/n$, for each positive integer $n$. Since the Koebe function appears as an extremal function in many problems of geometric theory of analytic functions, one could expect that $\gamma_{n}=1/n$ holds for functions in $\mathcal{S}$. But this is not true in general, even in order of magnitude. The problem of computing
the bound of the logarithmic coefficients are studied
recently by several authors in different contexts, for instance see \cite{Ali-Allu-PAMS-2018,Ali-Allu-Thomas-CRMCS-2018,Cho-Kowalczyk-kwon-Lecko-Sim-RACSAM-2020,Thomas-PAMS-2016,Ponnusamy-Sugawa-BDSM-2021}. \vspace{2mm}

As usual, instead of the whole class $\mathcal{S}$, one can take into account their subclasses for which the problem of finding sharp estimates of Hankel determinant of logarithmic coefficients can be studied. The problem of computing the sharp bounds of $H_{2,1}(F_{f}/2)$ was considered
in \cite{Kowalczyk-Lecko-BAMS-2022} for starlike and convex functions. It is now appropriate to remark that $H_{2,1}(F_{f}/2)$ is invariant under rotation since for $f_{\theta}(z):=e^{-i\theta}f(e^{i\theta}z)$, $\theta\in\mathbb{R}$ when $f\in\mathcal{S}$ we have
\begin{align*}
	H_{2,1}(F_{f_{\theta}}/2)=\frac{e^{4i\theta}}{48}\left(a^4_2 - 12 a^2_3 + 12 a_2 a_4\right)=e^{4i\theta}H_{2,1}(F_{f}/2).
\end{align*}

\section{Preliminary results}
The Carath$\acute{e}$odory class $\mathcal{P}$ and its coefficients bounds plays a significant roles in establishing the bounds of Hankel determinants. The class $\mathcal{P}$ of analytic functions $h$ defined for $z\in\mathbb{D}$ is given by
\begin{align}\label{eq-1.5}
	p(z)=1+\sum_{n=1}^{\infty}c_n z^n
\end{align}
with positive real part in $\mathbb{D}$. A member of $\mathcal{P}$ is called a Carath$\acute{e}$odory function. It is known that $c_n\leq 2$, $n\geq 1$ for a function $p\in\mathcal{P}$ (see \cite{Duren-1983-NY}).\vspace{2mm}

In this section, we present key lemmas which will be used to prove the main results of this paper. Parametric representations of the coefficients are often useful in finding the bound of Hankel determinants, in this regard, Libera and Zlotkiewicz \cite{Libera-Zlotkiewicz-PAMS-1982, Libera-Zlotkiewicz-PAMS-1983} derived the following parameterizations of possible values of $c_2$ and $c_3$.
\begin{lem}\cite{Libera-Zlotkiewicz-PAMS-1982,Libera-Zlotkiewicz-PAMS-1983}\label{lem-2.1}
If $p\in\mathcal{P}$ is of the form \eqref{eq-1.5} with $c_1\geq 0$, then 
\begin{align}\label{eq-2.1}
	&c_1=2\tau_1,\\\label{eq-2.2} &c_2=2\tau^2_1 +2(1-\tau^2_1)\tau_2
\end{align}
and
\begin{align}\label{eq-2.3}
	c_3= 2\tau^3_1  + 4(1 -\tau^2_1)\tau_1\tau_2 - 2(1 - \tau^2_1)\tau_1\tau^2_2 + 
	2(1 - \tau^2_1)(1 - |\tau_2|^2)\tau_3
\end{align}
for some $\tau_1\in[0,1]$ and $\tau_2,\tau_3\in\overline{\mathbb{D}}:= \{z\in\mathbb{C}:|z|\leq 1\}$.\vspace{1.2mm}

For $\tau_1\in\mathbb{T}:=\{z\in\mathbb{C}:|z|=1\}$, there is a unique function $p\in\mathcal{P}$ with $c_1$ as in \eqref{eq-2.1}, namely
\begin{align*}
	p(z)=\frac{1+\tau_1 z}{1-\tau_1 z}, \;\;z\in\mathbb{D}.
\end{align*}

For $\tau_1\in\mathbb{D}$ and $\tau_2\in\mathbb{T}$, there is a unique function $p\in\mathcal{P}$ with $c_1$ and $c_2$ as in \eqref{eq-2.1} and \eqref{eq-2.2}, namely
\begin{align*}
	p(z)=\frac{1+(\overline{\tau_1}\tau_2 +\tau_1)z+\tau_2 z^2}{1 +(\overline{\tau_1}\tau_2 -\tau_1)z-\tau_2 z^2}, \;\;z\in\mathbb{D}.
\end{align*}

For $\tau_1,\tau_2\in\mathbb{D}$ and $\tau_3\in\mathbb{T}$, there is a unique function $p\in\mathcal{P}$ with $c_1,c_2$ and $c_3$ as in \eqref{eq-2.1}--\eqref{eq-2.2}, namely
\begin{align*}
	p(z)=\frac{1+(\overline{\tau_2}\tau_3+\overline{\tau_1}\tau_2 +\tau_1)z +(\overline{\tau_1}\tau_3+ \tau_1\overline{\tau_2}\tau_3 +\tau_2)z^2 +\tau_3 z^3}{1 +(\overline{\tau_2}\tau_3+ \overline{\tau_1}\tau_2 -\tau_1)z +(\overline{\tau_1}\tau_3- \tau_1\overline{\tau_2}\tau_3 -\tau_2)z^2 -\tau_3 z^3}, \;\;z\in\mathbb{D}.
\end{align*}
\end{lem}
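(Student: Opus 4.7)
The plan is to pass from $\mathcal{P}$ to the class of Schwarz functions and then iterate the Schur--Pick reduction. Any $p \in \mathcal{P}$ with $p(0) = 1$ corresponds bijectively to a Schwarz function $w : \mathbb{D} \to \mathbb{D}$ with $w(0) = 0$ via the M\"obius identification $p(z) = (1 + w(z))/(1 - w(z))$, equivalently $w = (p-1)/(p+1)$. Expanding $w(z) = b_1 z + b_2 z^2 + b_3 z^3 + \cdots$ and matching Taylor coefficients on both sides of $p(1-w) = 1+w$ produces the standard relations
\[
c_1 = 2 b_1, \qquad c_2 = 2 b_2 + 2 b_1^2, \qquad c_3 = 2 b_3 + 4 b_1 b_2 + 2 b_1^3,
\]
so the problem reduces to parametrizing the triple $(b_1, b_2, b_3)$ over all Schwarz functions.

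To parametrize, set $g(z) := w(z)/z$, which by Schwarz's lemma maps $\mathbb{D}$ into $\overline{\mathbb{D}}$; let $\tau_1 := g(0) = b_1$, and note that the hypothesis $c_1 \geq 0$ forces $\tau_1 \in [0,1]$. When $|\tau_1| < 1$, the M\"obius reduction $g_1(z) := (g(z) - \tau_1)/(1 - \overline{\tau_1}\, g(z))$ is again a self-map of $\overline{\mathbb{D}}$ vanishing at the origin, so $g_1(z) = z\, h(z)$ for some $h : \mathbb{D} \to \overline{\mathbb{D}}$; set $\tau_2 := h(0) \in \overline{\mathbb{D}}$. One more identical Schur step applied to $h$ produces a parameter $\tau_3 \in \overline{\mathbb{D}}$. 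Substituting back through the chain $h \to g_1 \to g \to w$ and expanding to order $z^3$ gives
\[
b_1 = \tau_1, \qquad b_2 = (1 - \tau_1^2)\tau_2, \qquad b_3 = (1 - \tau_1^2)\bigl[(1 - |\tau_2|^2)\tau_3 - \tau_1 \tau_2^2\bigr],
\]
and plugging these into the three relations from the first paragraph reproduces \eqref{eq-2.1}--\eqref{eq-2.3} verbatim.

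For the three uniqueness statements, the mechanism is that $|\tau_k| = 1$ at some step forces the corresponding iterated Schur function to be the unimodular constant $\tau_k$ (equality case of Schwarz's lemma), which truncates the iteration and determines $w$ as a specific finite Blaschke product of degree $k$. Running the chain $w = z\,(\tau_1 + g_1)/(1 + \overline{\tau_1}\, g_1)$ etc.\ backwards and substituting into $p = (1+w)/(1-w)$ yields, after cancellation, the three closed rational expressions displayed in the lemma. The main obstacle I expect is the bookkeeping through these nested M\"obius substitutions---keeping the conjugations straight when $\tau_2$ is complex and handling the borderline cases where $|\tau_k|$ becomes $1$ earlier in the iteration---since the structural part (iterated Schwarz--Pick plus boundary rigidity) is a clean and well-known argument carried out explicitly in \cite{Libera-Zlotkiewicz-PAMS-1982, Libera-Zlotkiewicz-PAMS-1983}.
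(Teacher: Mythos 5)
Your proposal is correct: the Schur--Pick iteration on the Schwarz function $w=(p-1)/(p+1)$, with the coefficient relations $c_1=2b_1$, $c_2=2b_2+2b_1^2$, $c_3=2b_3+4b_1b_2+2b_1^3$ and the reductions $b_1=\tau_1$, $b_2=(1-\tau_1^2)\tau_2$, $b_3=(1-\tau_1^2)[(1-|\tau_2|^2)\tau_3-\tau_1\tau_2^2]$, reproduces \eqref{eq-2.1}--\eqref{eq-2.3} exactly, and the boundary-rigidity argument gives the three uniqueness cases. The paper itself offers no proof --- it imports the lemma by citation from Libera--Zlotkiewicz --- and your argument is essentially the one carried out in those sources, so there is nothing further to reconcile.
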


\begin{lem}\cite{Cho-Kim-Sugawa-JMSJ-2007}\label{lem-2.2}
Let $A, B, C$ be real numbers and
\begin{align*}
	Y(A,B,C):=\max\{|A+ Bz +Cz^2| +1-|z|^2: z\in\overline{\mathbb{D}}\}.
\end{align*}
\noindent{(i)} If $AC\geq 0$, then
\begin{align*}
	Y(A,B,C)=\begin{cases}
		|A|+|B|+|C|, \;\;\;\;\;\;\;\;\;\;\;\;\;|B|\geq 2(1-|C|), \vspace{2mm}\\ 1+|A|+\dfrac{B^2}{4(1-|C|)}, \;\;\;\;\;|B|< 2(1-|C|).
	\end{cases}
\end{align*}
\noindent{(ii)} If $AC<0$, then
\begin{align*}
	Y(A,B,C)=\begin{cases}
		1-|A|+\dfrac{B^2}{4(1-|C|)}, \;\;\;\;-4AC(C^{-2}-1)\leq B^2\land|B|< 2(1-|C|),\vspace{2mm} \\ 1+|A|+\dfrac{B^2}{4(1+|C|)}, \;\;\;\; B^2<\min\{4(1+|C|)^2,-4AC(C^{-2}-1)\}, \vspace{2mm} \\ R(A,B,C), \;\;\;\;\;\;\;\;\;\;\;\;\;\;\;\;\;\;\; otherwise,
	\end{cases}
\end{align*}
where
\begin{align*}
	R(A,B,C):= \begin{cases}
		|A|+|B|-|C|, \;\;\;\;\;\;\;\;\;\;\;\;\; |C|(|B|+4|A|)\leq |AB|, \vspace{2mm}\\ -|A|+|B|+|C|, \;\;\;\;\;\;\;\;\;\;\; |AB|\leq |C|(|B|-4|A|), \vspace{2mm}\\ (|C| +|A|)\sqrt{1-\dfrac{B^2}{4AC}}, \;\;\; otherwise.
	\end{cases}
\end{align*}
\end{lem}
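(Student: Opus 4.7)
The plan is to write $z = re^{i\theta}$ with $r \in [0,1]$ and maximise $|A+Bz+Cz^2|$ first over $\theta$ at fixed $r$, then over $r$. A direct expansion combined with the identity $\cos\theta\cos 2\theta + \sin\theta\sin 2\theta = \cos\theta$ yields
\begin{align*}
|A+Bz+Cz^2|^2 = 4ACr^2 s^2 + 2Br(A+Cr^2)\,s + A^2 + B^2 r^2 + C^2 r^4 - 2ACr^2,
\end{align*}
where $s := \cos\theta \in [-1,1]$. The symmetries $(A,B,C) \mapsto (-A,-B,-C)$, which preserves the modulus, and $z \mapsto -z$, which replaces $B$ by $-B$, reduce the problem to the case $A \ge 0$ and $B \ge 0$; the dichotomy $AC \ge 0$ versus $AC < 0$ then becomes $C \ge 0$ versus $C < 0$.

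In part (i), where $AC \ge 0$, the quadratic in $s$ above is convex with non-negative linear coefficient, so its maximum on $[-1,1]$ is attained at $s = 1$ and the inner maximum collapses to $(A+Br+Cr^2)^2$. Restoring absolute values, the problem reduces to
\begin{align*}
Y(A,B,C) = \max_{r\in[0,1]} \bigl(|A| + |B|\,r + |C|\,r^2 + 1 - r^2\bigr),
\end{align*}
a one-variable optimisation that elementary calculus handles at once: when $|C| \ge 1$ or $|B| \ge 2(1-|C|)$ the maximum is attained at $r = 1$ and equals $|A|+|B|+|C|$; otherwise the interior critical point $r = |B|/(2(1-|C|))$ lies in $(0,1)$ and gives $1 + |A| + B^2/(4(1-|C|))$.

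Part (ii), where $AC < 0$ and the quadratic in $s$ is concave, is the substantial case. Its maximum on $[-1,1]$ is attained either at the interior stationary point $s^\ast = -B(A+Cr^2)/(4ACr)$ (when $|s^\ast| \le 1$) or at one of the endpoints $s = \pm 1$, and in each sub-regime one obtains an explicit piecewise-smooth formula for $M(r) := \max_{|z|=r}|A+Bz+Cz^2|$. Maximising $M(r) + 1 - r^2$ over $r \in [0,1]$ then accounts for the three alternatives displayed in (ii): two interior critical points of $r$, lying on distinct sub-branches of $M(r)$, yield the expressions $1 + |A| + B^2/(4(1+|C|))$ and $1 - |A| + B^2/(4(1-|C|))$; the remaining boundary/stationary regime is packaged in $R(A,B,C)$, whose three sub-cases record whether the dominant candidate is the endpoint $r = 1$ value $|A|+|B|-|C|$, the endpoint $r = 0$ value combined with an interior $s$-stationary value $-|A|+|B|+|C|$, or the fully interior stationary value $(|A|+|C|)\sqrt{1 - B^2/(4AC)}$.

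I expect the main obstacle to be the systematic bookkeeping in part (ii). The thresholds comparing $B^2$ with $-4AC(C^{-2}-1)$ and with $4(1\pm|C|)^2$, together with the three inequalities defining $R(A,B,C)$, all arise as loci along which two candidate maxima become equal. The plan is therefore to compute every candidate explicitly — the values at $r = 0$ and $r = 1$, the stationary values on each branch of $M(r)$, and the fully interior stationary value — perform every pairwise comparison to identify which candidate dominates in which region of parameter space, and finally verify continuity of the piecewise formula across every threshold to confirm that no case has been overlooked.
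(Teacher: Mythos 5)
The paper does not prove this lemma at all: it is imported verbatim from Cho, Kim and Sugawa \cite{Cho-Kim-Sugawa-JMSJ-2007} and used as a black box, so there is no internal proof to compare against. Judged on its own terms, your strategy is the standard (and correct) one, and it is essentially the route of the original source: your expansion
\begin{align*}
|A+Bz+Cz^2|^2 = 4ACr^2 s^2 + 2Br(A+Cr^2)s + A^2 + B^2r^2 + C^2r^4 - 2ACr^2, \qquad s=\cos\theta,
\end{align*}
is correct, the symmetry reduction to $A\ge 0$, $B\ge 0$ is legitimate, and your treatment of part (i) is complete: convexity in $s$ forces $s=1$, the inner maximum collapses to $(A+Br+Cr^2)^2$, and the one-variable optimisation over $r$ reproduces both branches of the stated formula (noting that $|C|\ge 1$ makes $|B|\ge 2(1-|C|)$ automatic, so the two conditions you list coincide with the single condition in the lemma).

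Part (ii), however, is still a roadmap rather than a proof, and that is where the entire content of the lemma lives. You have correctly identified all the candidate maxima --- the endpoints $s=\pm 1$, the interior $s$-vertex value $|A-Cr^2|\sqrt{1-B^2/(4AC)}$, and the endpoints and critical points in $r$ --- but the pairwise comparisons, the identification of which threshold ($B^2$ versus $-4AC(C^{-2}-1)$, versus $4(1\pm|C|)^2$, and the three inequalities defining $R$) separates which pair of candidates, and the verification that the resulting partition of parameter space is exhaustive, are all deferred. One small inaccuracy in the sketch: the value $-|A|+|B|+|C|$ arises from the endpoint $s=-1$ at $r=1$ (i.e.\ $|A-B+C|$ with $A,B\ge 0$, $C<0$), not from a combination of the $r=0$ endpoint with an interior $s$-stationary value; and some of the thresholds mark changes in the branch structure of $M(r)$ (admissibility of the $s$-vertex) rather than equality of two candidates. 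None of this invalidates the plan, but until the bookkeeping is actually carried out the proposal establishes part (i) only.
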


For a better clarity in our presentation, we divide this into two section consisting of different families of functions from the class $\mathcal{A}$ and prove our main results for starlike functions and convex functions associated with lune.
\section{\bf The second Hankel determinant of logarithmic coefficients of functions in the class $\mathcal{S}^{*}_{\mbox{\leftmoon}}$}
We obtain the following result finding the sharp bound of $ |H_{2,1}(F_{f}/2)| $ for functions in the class $ \mathcal{S}^{*}_{\mbox{\leftmoon}} $.
\begin{thm}\label{th-2.1}
Let $ f\in \mathcal{S}^{*}_{\mbox{\leftmoon}} $. Then
\begin{align}\label{Eq-3.1}
	|H_{2,1}(F_{f}/2)|\leq\frac{1}{16}.
\end{align}
The inequality is sharp for the function $ g\in \mathcal{S}^{*}_{\mbox{\leftmoon}} $ given by
\begin{align*}
	g(z)=z\exp\left(\int_{0}^{z}\frac{x^2 +\sqrt{1+x^4}-1}{x}dx\right)
	=z +\frac{z^3}{2} +\frac{z^5}{4}+\cdots.
\end{align*}
\end{thm}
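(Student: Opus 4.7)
The plan is to convert the subordination $zf'(z)/f(z)\prec q(z) = z+\sqrt{1+z^2}$ into an identity $zf'(z)/f(z) = q(w(z))$ for some Schwarz function $w$, and then to rewrite $w$ through a Carath\'eodory function $p(z) = 1+c_1 z+c_2 z^2+c_3 z^3+\cdots\in\mathcal{P}$ via $w = (p-1)/(p+1)$. First I would expand $q(z) = 1 + z + \tfrac12 z^2 - \tfrac18 z^4+\cdots$ using the binomial series, and expand $zf'(z)/f(z) = 1+a_2 z + (2a_3-a_2^2)z^2 + (3a_4-3a_2a_3+a_2^3)z^3+\cdots$ directly from \eqref{eq-1.2}. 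Matching coefficients through the Schwarz function produces explicit polynomial formulas for $a_2,a_3,a_4$ as polynomials in $c_1,c_2,c_3$.

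Substituting these formulas into $H_{2,1}(F_f/2) = \tfrac{1}{48}(a_2^4 - 12 a_3^2 + 12 a_2 a_4)$ given in \eqref{eq-1.4} yields a polynomial expression $\Phi(c_1,c_2,c_3)$. Because $H_{2,1}(F_f/2)$ is rotation-invariant (as remarked after \eqref{eq-1.4}), I may assume $c_1\in[0,2]$, and hence apply the Libera--Zlotkiewicz parametrizations \eqref{eq-2.1}--\eqref{eq-2.3} from Lemma \ref{lem-2.1} with $\tau_1\in[0,1]$, $\tau_2,\tau_3\in\overline{\mathbb{D}}$. This reduces the problem to bounding a three-parameter expression.

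Since $\tau_3$ enters $\Phi$ only linearly, grouping terms by $\tau_3$ and using $|\tau_3|\leq 1$ via the triangle inequality recasts the bound into the standard form $|A(\tau_1)+B(\tau_1)\tau_2+C(\tau_1)\tau_2^2|+D(\tau_1)(1-|\tau_2|^2)$, which is precisely the setting of Lemma \ref{lem-2.2}. I would then run the case analysis prescribed by that lemma, splitting according to the sign of $A(\tau_1)C(\tau_1)$ and the size of $|B(\tau_1)|$ relative to $2(1-|C(\tau_1)|)$, and within each branch optimize the resulting one-variable function over $\tau_1\in[0,1]$ to confirm that it never exceeds $1/16$. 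The endpoint cases $\tau_1=0$ and $\tau_1=1$ would be treated separately; in particular, $\tau_1=0$ reduces to $|{-12 a_3^2}|/48 = a_3^2/4$ and immediately delivers the bound $1/16$, while $\tau_1=1$ forces $w$ to be a rotation of the identity and degenerates favourably.

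For sharpness, I would verify that the candidate $g$ lies in $\mathcal{S}^{*}_{\mbox{\leftmoon}}$ by differentiating its integral representation to obtain $zg'(z)/g(z) = z^2+\sqrt{1+z^4} = q(z^2)$, corresponding to the Schwarz function $w(z)=z^2$ (i.e.\ $\tau_1=0,\ \tau_2=0,\ |\tau_3|=1$). The expansion $g(z)=z+\tfrac12 z^3+\tfrac14 z^5+\cdots$ gives $a_2=0,\ a_3=1/2,\ a_4=0$, whence
\[
H_{2,1}(F_g/2) = \tfrac{1}{48}\bigl(0 - 12\cdot\tfrac14 + 0\bigr) = -\tfrac{1}{16},
\]
matching the bound in modulus. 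The principal obstacle is the Lemma \ref{lem-2.2} branching: the coefficients $A(\tau_1),B(\tau_1),C(\tau_1),D(\tau_1)$ arising from $\Phi$ are polynomial in $\tau_1$ with mixed signs, so verifying each of the subcases (especially the ``otherwise'' branches involving $\sqrt{1-B^2/(4AC)}$) requires a careful calculus/inequality argument to show that the branch-by-branch maximum over $\tau_1\in[0,1]$ never exceeds $1/16$, with equality attained precisely at the configuration produced by $w(z)=z^2$.
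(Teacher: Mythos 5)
Your proposal follows essentially the same route as the paper: subordination $\to$ Schwarz function $\to$ Carath\'eodory coefficients, the Libera--Zlotkiewicz parametrization, the triangle inequality in $\tau_3$ to reach the $|A+B\tau_2+C\tau_2^2|+1-|\tau_2|^2$ form, Lemma \ref{lem-2.2} (here $AC>0$ and $|B|\geq 2(1-|C|)$ holds for all $\tau_1\in(0,1)$, so only the first branch of case (i) is needed), and the same extremal function. One small slip: for $w(z)=z^2$ one gets $p(z)=(1+z^2)/(1-z^2)$, i.e.\ $c_1=0$, $c_2=2$, so the extremal configuration is $\tau_1=0$, $|\tau_2|=1$ (with $\tau_3$ irrelevant), not $\tau_2=0$, $|\tau_3|=1$; this does not affect your direct verification of sharpness from $a_2=a_4=0$, $a_3=1/2$.
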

\begin{proof}
Let $f\in\mathcal{S}^{*}_{\mbox{\leftmoon}}$. Then in view Definition \ref{def-1.1}, it follows that
\begin{align}\label{eq-3.1}
	\frac{zf^{\prime}(z)}{f(z)}=w(z) +\sqrt{1 +w^2(z)}, 
\end{align} 
where $w$ is a Schwarz function with $w(0)=0$ and $|w(z)|\leq 1$ in $\mathbb{D}$. Let $h\in\mathcal{P}$. Then we can write
\begin{align}\label{eq-3.2}
	w(z)=\frac{h(z)-1}{h(z)+1}.
\end{align}
From \eqref{eq-3.1} and \eqref{eq-3.2}, a simple computation shows that
\begin{align}\label{eq-3.3}
	\begin{cases}
		a_2=\dfrac{1}{2}c_1, \vspace{2mm}\\ a_3=\dfrac{1}{16}c^2_{1} + \dfrac{1}{4}c_2, \vspace{2mm}\\ a_4=\dfrac{1}{24}c_1 c_2 +\dfrac{1}{6} c_3 -\dfrac{1}{96}c^3_{1}.
	\end{cases}
\end{align}
Since the class $\mathcal{P}$ and $H_{2,1}(F_{f}/2)$ is invariant under rotation, we may assume that $c_1\in[0,2]$(see \cite{Carathéodory-MA-1907}; see also \cite[Vol. I, p. 80, Theorem 3]{Goodman-1983}), that is, in view of \eqref{eq-2.1}, that $\tau_1\in[0,1]$. Using \eqref{eq-1.4} and \eqref{eq-3.3}, it is easy to see that
\begin{align}\label{eq-3.4}
	H_{2,1}(F_{f}/2)&=\frac{1}{48} \left(a^4_2 - 12 a^2_3 + 12 a_2 a_4\right)\\&\nonumber=\frac{1}{3072}\left(-3c^4_{1} -8c^2_{1} c_2 -48c^2_{2} +64c_1 c_3\right).
\end{align}
By the Lemma \ref{lem-2.1} and \eqref{eq-3.4}, a straightforward computation shows that
\begin{align}\label{eq-3.5}
	H_{2,1}(F_{f}/2)&=\frac{1}{192}\left(-3\tau^4_{1} +4(1-\tau^2_{1})\tau^2_{1}\tau_{2}-4(1-\tau^2_{1})(3+\tau^2_{1})\tau^2_{2} \right.\\&\nonumber\left.\quad +16\tau_{1}\tau_{3}(1-\tau^2_{1})(1-|\tau^2_{2}|)\right).
\end{align}
Below, we discuss the following possible cases on $\tau_{1}$: \vspace{2mm}

\noindent{\bf Case 1.} Suppose that $\tau_{1}=1$. Then from \eqref{eq-3.5}, we easily obtain 
\begin{align*}
	|H_{2,1}(F_{f}/2)|=\frac{1}{64}.
\end{align*}

\noindent{\bf Case 2.} Let $\tau_{1}=0$. Then from \eqref{eq-3.5}, we see that 
\begin{align*}
	|H_{2,1}(F_{f}/2)|=\frac{1}{16}|\tau_{2}|^2\leq\frac{1}{16}.
\end{align*}

\noindent{\bf Case 3.} Suppose that $\tau_{1}\in(0,1)$. Applying the triangle inequality in \eqref{eq-3.5} and by using the fact that $|\tau_{3}|\leq 1$, we obtain
\begin{align}\label{eq-3.6}
	\nonumber|H_{2,1}(F_{f}/2)|&=\frac{1}{192}\left(|-3\tau^4_{1} +4(1-\tau^2_{1})\tau^2_{1}\tau_{2}-4(1-\tau^2_{1})(3+\tau^2_{1})\tau^2_{2}| \right.\\&\nonumber\left.\quad +16\tau_{1}(1-\tau^2_{1}) (1-|\tau^2_{2}|) \right)\\&=\frac{1}{12}\tau_{1}(1-\tau^2_{1})\left(|A+B\tau_{2}+C\tau^2_{2}| +1 -|\tau_{2}|^2\right),
\end{align}
where 
\begin{align*}
	A:=\frac{-3\tau^3_{1}}{16(1-\tau^2_{1})}, \;\; B:=\frac{\tau_{1}}{4}
	\;\; \mbox{and}\;\; C:=\frac{-(3+\tau^2_{1})}{4\tau_{1}}.
\end{align*}
Observe that $AC>0$, so we can apply case (i) of Lemma \ref{lem-2.2}. Now we check all the conditions of case (i). \vspace{2mm}

\noindent{\bf 3(a)} We note the inequality
\begin{align*}
	|B|-2(1-|C|)&=\frac{\tau_{1}}{4} -2\left(1-\frac{(3+\tau^2_{1})} {4\tau_{1}}\right)\\&=\frac{3\tau^2_{1} -8\tau_{1} +6}{4\tau_{1}}> 0
\end{align*}
which is true for all $\tau_{1}\in(0,1)$. Thus it follows from  Lemma \ref{lem-2.2} and the inequality \eqref{eq-3.6} that
\begin{align*}
	|H_{2,1}(F_{f}/2)|&\leq\frac{1}{12}\tau_{1}(1-\tau^2_{1})\left(|A|+|B|+|C|\right)\\&=\frac{1}{192}\left(12-4\tau^2_{1}-5\tau^4_{1}\right)\\&\leq\frac{1}{16}.
\end{align*}
\noindent{\bf 3(b)} Next, it is easy to check that
\begin{align*}
	|B|-2(1-|C|)&=\frac{\tau_{1}}{4} -2\left(1-\frac{(3+\tau^2_{1})} {4\tau_{1}}\right)\\&=\frac{3\tau^2_{1} -8\tau_{1} +6}{4\tau_{1}}< 0
\end{align*}
which is not true for all $\tau_{1}\in(0,1)$.\vspace{1.2mm}

Summarizing cases $1$, $2$, and $3$, the inequality \eqref{Eq-3.1} is established.\vspace{2mm}

To complete the proof, it is sufficient to show that the bound is sharp. In order to show that we consider the function $g\in\mathcal{S}^{*}_{\mbox{\leftmoon}}$ as follows
\begin{align*}
	g(z)=z\exp\left(\int_{0}^{z}\frac{x^2 +\sqrt{1+x^4}-1}{x}dx\right)
	=z +\frac{z^3}{2} +\frac{z^5}{4}+\cdots,
\end{align*}
with $a_2=a_4=0$ and $a_3=1/2$. By a simple computation, it can be easily shown that $|H_{2,1}(F_{g}/2)|=1/16$. This completes the proof.
\end{proof}

\section{\bf The second Hankel determinant of logarithmic coefficients of functions in the class $\mathcal{C}_{\mbox{\leftmoon}}$}
We obtain the following sharp bound of $ |H_{2,1}(F_{f}/2)| $ for functions in the class $ \mathcal{C}_{\mbox{\leftmoon}} $. 
\begin{thm}\label{th-2.3}
Let $ f\in \mathcal{C}_{\mbox{\leftmoon}} $. Then
\begin{align}\label{Eq-4.1}
	|H_{2,1}(F_{f}/2)|\leq\frac{23}{3264}
\end{align}
The inequality is sharp for the function $ h\in \mathcal{C}_{\mbox{\leftmoon}} $ is given by
\begin{align*}
	h(z)=\int_{0}^{z}\frac{h_{0}(x)}{x}dx=z +\frac{\sqrt{69}}{12\sqrt{17}}z^3 +\frac{1}{20}\left(\frac{69}{136}+\frac{\sqrt{69}}{4\sqrt{17}}\right)z^5 +\cdots,
\end{align*}
where $h_{0}(z)$ is given by \eqref{eq-4.7}.
\end{thm}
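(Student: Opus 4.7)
The plan is to follow the same template as in the proof of Theorem~\ref{th-2.1} but with the coefficient relations dictated by the convex (rather than starlike) subordination. First, for $f\in\mathcal{C}_{\mbox{\leftmoon}}$ Definition~\ref{def-1.1} gives a Schwarz function $w$ with $1+zf''(z)/f'(z)=w(z)+\sqrt{1+w^2(z)}$. Writing $w=(h-1)/(h+1)$ with $h\in\mathcal{P}$ of the form \eqref{eq-1.5}, I would expand both sides to order $z^3$ and compare coefficients. This gives linear-in-$c_j$ expressions for $2a_2$, $6a_3$, and $12a_4$; the denominators $2,6,12$ coming from the convex normalization make this case strictly more arithmetic than the starlike one, and I expect $a_2=c_1/4$, together with $a_3$ and $a_4$ being polynomials in $c_1,c_2,c_3$ with rational coefficients whose denominators are multiples of $48$.

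Next, I substitute these into the identity $H_{2,1}(F_f/2)=\tfrac{1}{48}(a_2^4-12a_3^2+12a_2a_4)$ from \eqref{eq-1.4} to obtain a homogeneous quartic form in $c_1,c_2,c_3$. Exploiting rotation invariance of $H_{2,1}(F_f/2)$ (as recorded in the paper) I take $c_1\in[0,2]$, i.e.\ $\tau_1\in[0,1]$, and then insert the Libera--Zlotkiewicz parameterization \eqref{eq-2.1}--\eqref{eq-2.3} from Lemma~\ref{lem-2.1}. The term linear in $\tau_3$ is bounded by $|\tau_3|\le 1$ via the triangle inequality, and the remaining piece factors as
\begin{equation*}
|H_{2,1}(F_f/2)|\le \Phi(\tau_1)\bigl(|A+B\tau_2+C\tau_2^{2}|+1-|\tau_2|^{2}\bigr),
\end{equation*}
where $\Phi(\tau_1)$ is proportional to $\tau_1(1-\tau_1^2)$ and $A,B,C$ are explicit rational functions of $\tau_1$. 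Exactly as in the starlike computation, the two boundary cases $\tau_1=0$ and $\tau_1=1$ are handled separately by direct substitution and give values strictly less than $23/3264$.

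For $\tau_1\in(0,1)$ I apply Lemma~\ref{lem-2.2}. Here I expect the sign of $AC$ (unlike in Theorem~\ref{th-2.1}) to be negative on part of the interval, forcing a split into the cases (ii) of Lemma~\ref{lem-2.2} as well as (i); each branch yields an explicit function $G_j(\tau_1)$ of one real variable. The heart of the proof is then a one-variable optimization: I would compute $G_j'(\tau_1)$, locate the critical point numerically/algebraically, and check that the global maximum of $\max_j G_j(\tau_1)$ over $(0,1)$ is exactly $23/3264$. The exact location of the maximizer, together with $\tau_2=0$ and $|\tau_3|=1$, should correspond to choosing the Schwarz function whose induced extremal convex function is $h(z)=\int_0^z h_0(x)/x\,dx$ with $h_0$ defined by \eqref{eq-4.7}; evaluating $H_{2,1}(F_h/2)$ using $a_2=0$, $a_3=\sqrt{69}/(12\sqrt{17})$, and the given $a_5$ then reproduces the bound and establishes sharpness.

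The main obstacle, as in most Hankel-determinant estimates of this type, is neither the coefficient bookkeeping nor the invocation of Lemmas~\ref{lem-2.1}--\ref{lem-2.2}, but the delicate one-variable optimization in step~3: showing that the piecewise-defined envelope of $G_j(\tau_1)$ actually attains its maximum at an interior critical point where the value simplifies to the rational number $23/3264$, and ruling out competing local maxima in the other branches of Lemma~\ref{lem-2.2}(ii). Guessing the extremal $\tau_1$ (equivalently, the extremal coefficient $c_1$) from the structure of the extremal function $h$ should make the algebra tractable and also explain the appearance of the denominator $3264=48\cdot 68$ in the final bound.
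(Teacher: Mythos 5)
Your upper-bound argument coincides with the paper's: the coefficient formulas are $a_2=c_1/4$, $a_3=c_1^2/48+c_2/12$, $a_4=c_1c_2/96+c_3/24-c_1^3/384$, which give $H_{2,1}(F_f/2)=\tfrac{1}{36864}\left(-7c_1^4-8c_1^2c_2-64c_2^2+96c_1c_3\right)$, and after the Libera--Zlotkiewicz substitution the paper arrives at exactly the factorization you describe, with $A=-\tau_1^3/(8(1-\tau_1^2))$, $B=\tau_1/2$, $C=-(2+\tau_1^2)/(3\tau_1)$. One of your expectations is off in a way that changes the workload: $A$ and $C$ are both negative for every $\tau_1\in(0,1)$, so $AC>0$ on the whole interval and only case (i) of Lemma~\ref{lem-2.2} is ever invoked; moreover $|B|-2(1-|C|)=(7\tau_1^2-12\tau_1+8)/(6\tau_1)>0$ always, so only the branch $Y=|A|+|B|+|C|$ occurs. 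The resulting one-variable function is $\tfrac{1}{2304}\left(16+4\tau_1^2-17\tau_1^4\right)$, a concave quadratic in $\tau_1^2$ maximized at $\tau_1^2=2/17$ with value $23/3264$; the ``delicate optimization'' you brace for is a one-line exercise and there are no competing branches to rule out.

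The genuine gap is in your sharpness step. Equality in the branch $Y(A,B,C)=|A|+|B|+|C|$ forces $|\tau_2|=1$ (here $\tau_2=-1$, which aligns the three negative real terms), not $\tau_2=0$ as you assert; with $\tau_2=0$ one only gets $|A|+1$, which is strictly smaller since $|C|\geq 1$. More seriously, the extremal configuration is $\tau_1=\sqrt{2/17}$, $\tau_2=-1$, hence $c_1=2\sqrt{2/17}\neq 0$ and $a_2\neq 0$, so it cannot correspond to the stated function $h$, which has $a_2=0$; indeed $a_2=0$ means $\tau_1=0$, a case where the proof only yields $|H_{2,1}(F_f/2)|\leq 1/144<23/3264$. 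Worse, the function $h_0$ of \eqref{eq-4.7} satisfies $zh_0'(z)/h_0(z)=1+\lambda\left(z^2+\sqrt{1+z^4}-1\right)$ with $\lambda=\sqrt{69/68}>1$, so the would-be Schwarz function has second coefficient $\lambda>1$ (equivalently, the induced Carath\'eodory coefficient is $c_2=\sqrt{69/17}>2$); thus $h_0\notin\mathcal{S}^{*}_{\mbox{\leftmoon}}$ and $h\notin\mathcal{C}_{\mbox{\leftmoon}}$, and your plan to read the extremal function off the maximizer cannot succeed because the two do not match. (This defect is inherited from the source: the paper merely substitutes the coefficients of $h$ into \eqref{eq-1.4} without checking membership or consistency with the maximizer. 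A correct equality case is the function generated via Lemma~\ref{lem-2.1} by $\tau_1=\sqrt{2/17}$, $\tau_2=-1$, i.e.\ $p(z)=(1-z^2)/(1-2\sqrt{2/17}\,z+z^2)$.)
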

\begin{proof}
Let $ f\in \mathcal{C}_{\mbox{\leftmoon}} $. Then by the Definition \ref{def-1.1}, we see that
\begin{align}\label{eq-4.1}
	1+ \frac{zf^{\prime\prime}(z)}{f^{\prime}(z)}=w(z) +\sqrt{1 +w^2(z)}, 
\end{align} 
where $w$ is a Schwarz function with $w(0)=0$ and $|w(z)|\leq 1$ in $\mathbb{D}$. Let $h\in\mathcal{P}$. Then we can write
\begin{align}\label{eq-4.2}
	w(z)=\frac{h(z)-1}{h(z)+1}.
\end{align}
In view of \eqref{eq-4.1} and \eqref{eq-4.2}, a simple computation shows that
\begin{align}\label{eq-4.3}
\begin{cases}
a_2=\dfrac{1}{4}c_1, \vspace{2mm}\\ a_3=\dfrac{1}{48}c^2_{1} + \dfrac{1}{12}c_2, \vspace{2mm}\\ a_4=\dfrac{1}{96}c_1 c_2 +\dfrac{1}{24} c_3 -\dfrac{1}{384}c^3_{1}.
\end{cases}
\end{align}
Since the class $\mathcal{P}$ and $H_{2,1}(F_{f}/2)$ is invariant under rotation, we may assume that $c_1\in[0,2]$(see \cite{Carathéodory-MA-1907}; see also \cite[Volume I, page 80, Theorem 3]{Goodman-1983}), that is, in view of \eqref{eq-2.1}, that $\tau_1\in[0,1]$. From \eqref{eq-1.4} and \eqref{eq-4.3}, an easy computation leads to 
\begin{align}\label{eq-4.4}
	H_{2,1}(F_{f}/2)&=\dfrac{1}{48} \left(a^4_2 - 12 a^2_3 + 12 a_2 a_4\right)\\&\nonumber=\dfrac{1}{36864}\left(-7c^4_{1} -8c^2_{1} c_2 -64c^2_{2} +96c_1 c_3\right).
\end{align}
In view of Lemma \ref{lem-2.1} and \eqref{eq-4.4}, we obtain
\begin{align}\label{eq-4.5}
	H_{2,1}(F_{f}/2)&=\frac{1}{2304}\left(-3\tau^4_{1} +12(1-\tau^2_{1})\tau^2_{1}\tau_{2}-8(1-\tau^2_{1})(2+\tau^2_{1})\tau^2_{2} \right.\\&\nonumber\left.\quad +24\tau_{1}\tau_{3}(1-\tau^2_{1})(1-|\tau^2_{2}|)\right).
\end{align}
Now, we may have the following cases on $\tau_{1}$: \vspace{2mm}

\noindent{\bf Case 1.} Suppose that $\tau_{1}=1$. Then from \eqref{eq-4.5}, we obtain 
\begin{align*}
	|H_{2,1}(F_{f}/2)|=\frac{1}{768}.
\end{align*}

\noindent{\bf Case 2.} Let $\tau_{1}=0$. Then from \eqref{eq-3.5}, we have the following estimate 
\begin{align*}
	|H_{2,1}(F_{f}/2)|=\frac{1}{144}|\tau_{2}|^2\leq\frac{1}{144}.
\end{align*}

\noindent{\bf Case 3.} Suppose that $\tau_{1}\in(0,1)$. Applying the triangle inequality in \eqref{eq-4.5} and by using the fact that $|\tau_{3}|\leq 1$, we obtain
\begin{align}\label{eq-4.6}
	\nonumber H_{2,1}(F_{f}/2)&\leq\frac{1}{2304}\left(|-3\tau^4_{1} +12(1-\tau^2_{1})\tau^2_{1}\tau_{2}-8(1-\tau^2_{1})(2+\tau^2_{1})\tau^2_{2}| \right.\\&\nonumber\left.\quad +24\tau_{1}(1-\tau^2_{1})(1-|\tau^2_{2}|)\right)\\&=\frac{1}{96}\tau_{1}(1-\tau^2_{1})\left(|A+B\tau_{2}+C\tau^2_{2}| +1 -|\tau_{2}|^2\right),
\end{align}
where 
\begin{align*}
	A:=\frac{-\tau^3_{1}}{8(1-\tau^2_{1})}, \;\; B:=\frac{\tau_{1}}{2}
	\;\; \mbox{and}\;\; C:=\frac{-(2+\tau^2_{1})}{3\tau_{1}}.
\end{align*}
Observe that $AC>0$, so we can apply case (i) of Lemma \ref{lem-2.2}. Now we check all the conditions of case (i). \vspace{2mm}

\noindent{\bf 3(a)} We note the inequality
\begin{align*}
	|B|-2(1-|C|)&=\frac{\tau_{1}}{2} -2\left(1- \frac{(2+\tau^2_{1})}{3\tau_{1}}\right)\\&=\frac{7\tau^2_{1} -12\tau_{1} +8}{6\tau_{1}}> 0,
\end{align*}
which is true for all $\tau_{1}\in(0,1)$. It follows from  Lemma \ref{lem-2.2} and the inequality \eqref{eq-4.6} that
\begin{align*}
	|H_{2,1}(F_{f}/2)|&\leq\frac{1}{96}\tau_{1}(1-\tau^2_{1})\left(|A|+|B|+|C|\right)\\&=\frac{1}{2304}\left(16+4\tau^2_{1}-17\tau^4_{1}\right)\\&\leq\frac{23}{3264}.
\end{align*}
\noindent{\bf 3(b)} For the second inequality, we see that
\begin{align*}
	|B|-2(1-|C|)&=\frac{\tau_{1}}{2} -2\left(1- \frac{(2+\tau^2_{1})}{3\tau_{1}}\right)\\&=\frac{7\tau^2_{1} -12\tau_{1} +8}{6\tau_{1}}< 0,
\end{align*}
which is not true for all $\tau_{1}\in(0,1)$. Summarizing, all the cases discussed above, we obtain the inequality of the result.\vspace{1.2mm}

In order to show the bound is sharp, we consider the function $ h\in \mathcal{C}_{\mbox{\leftmoon}}, $ for 
\begin{align}\label{eq-4.7}
	h_{0}(z)&=z\exp\left(\sqrt{\frac{69}{68}}\int_{0}^{z}\frac{x^2 +\sqrt{1+x^4}-1}{x}dx\right)\\&\nonumber=z +\frac{\sqrt{69}}{4\sqrt{17}}z^3 +\frac{1}{4}\left(\frac{69}{136}+\frac{\sqrt{69}}{4\sqrt{17}}\right)z^5 +\cdots,.
\end{align}
Let
\begin{align*}
	h(z)=\int_{0}^{z}\frac{h_{0}(x)}{x}dx=z +\frac{\sqrt{69}}{12\sqrt{17}}z^3 +\frac{1}{20}\left(\frac{69}{136}+\frac{\sqrt{69}}{4\sqrt{17}}\right)z^5 +\cdots,
\end{align*}
with $a_2=a_4=0$ and $a_3=\sqrt{69}/{12\sqrt{17}}$. A simple computation shows that $|H_{2,1}(F_{h}/2)|=23/32640$, which demonstrates that the bound is sharp. This completes the proof.
\end{proof}

\noindent\textbf{Compliance of Ethical Standards:}\\

\noindent\textbf{Conflict of interest.} The authors declare that there is no conflict  of interest regarding the publication of this paper.\vspace{1.5mm}

\noindent\textbf{Data availability statement.}  Data sharing is not applicable to this article as no datasets were generated or analyzed during the current study.

\end{document}